\numberwithin{table}{section}
\theoremstyle{plain}
\newtheorem{theorem}{Theorem}[section]
\newtheorem{lemma}[theorem]{Lemma}
\theoremstyle{definition} 
\newtheorem{remark}[theorem]{Remark}
\newtheorem{problem}[theorem]{Problem}
\renewcommand{\geq}{\geqslant}
\renewcommand{\leq}{\leqslant}
\renewcommand{\ge}{\geqslant}
\renewcommand{\le}{\leqslant}
\newcommand{\lhdeq}{\trianglelefteqslant}    
\newcommand{\Alt}{\mathrm{Alt}}
\newcommand{\C}{\mathrm{C}}
\newcommand{\Fit}{\mathrm{F}}
\newcommand{\Sym}{\mathrm{Sym}}
\def\@adminfootnotes{%
  \let\@makefnmark\relax  \let\@thefnmark\relax
  \ifx\@empty\@date\else \@footnotetext{\@setdate}\fi
  \ifx\@empty\@subjclass\else \@footnotetext{\@setsubjclass}\fi
  \ifx\@empty\@keywords\else \@footnotetext{\@setkeywords}\fi
  \ifx\@empty\thankses\else \@footnotetext{%
    \def\par{\let\par\@par}\@setthanks}%
  \fi}\makeatother   
\begin{document}

\hyphenation{}

\title[Uniformly generated groups]{Classifying uniformly generated groups}
\author[S.\,P. Glasby]{S.\,P. Glasby}

\address[Glasby]{
Centre for Mathematics of Symmetry and Computation,
University of Western Australia,
35 Stirling Highway,
Perth 6009, Australia.\newline
 Email: {\tt Stephen.Glasby@uwa.edu.au; WWW: \href{http://www.maths.uwa.edu.au/~glasby/}{http://www.maths.uwa.edu.au/$\sim$glasby/}}
}

\date{\today\hfill 2010 Mathematics subject classification:
 20E15, 14N20}

\begin{abstract}
  A finite group $G$ is called \emph{uniformly generated}, if whenever there is
  a (strictly ascending) chain of subgroups
  $1<\langle x_1\rangle<\langle x_1,x_2\rangle <\cdots<\langle x_1,x_2,\dots,x_d\rangle=G$, then $d$ is the minimal number of generators
  of~$G$. Our main result classifies
  the uniformly generated groups without using the simple group classification.
  These groups are related to finite projective geometries by a 
  result of Iwasawa on subgroup~lattices.
\end{abstract}

\maketitle

\section{Introduction}\label{S1}
Let $G$ be a finite group. A chain $1=G_0<G_1<\cdots<G_d=G$ of
subgroups of a $G$ is called \emph{unrefinable} if $G_i$ is maximal in
$G_{i+1}$ for each $i$. The \emph{length} of $G$, denoted $\ell(G)$,
is the maximum length of an unrefinable chain, and the \emph{depth} of
$G$, denoted $\lambda(G)$, is the minimum length of an unrefinable
chain. By~\cite{BLS2}, a nonabelian simple group $G$ satisfies
\[
  \lambda(G)\le\left(1+o(1)\right)\frac{\ell(G)}{\log_2(\ell(G))}.
\]
It was shown in~\cite{CST} that
$\ell(\Alt_n)=\left\lfloor\frac{3(n-1)}{2}\right\rfloor-s_2(n)$ where $\Alt_n$
is the alternating group of degree~$n$, and
$s_p(n)=\sum_{i\ge0}n_i$ is the sum of the digits
of the base-$p$ expansion of~$n=\sum_{i\ge0}n_ip^i$.
In~\cite{BLS} and~\cite{BLS2} the length and depth of finite groups, and algebraic groups, are studied. These references review some of the
earlier work in this area.

Iwasawa~\cite{I} proved a striking result, namely
$\ell(G) = \lambda(G)$ if and only if $G$ is supersolvable. Inspired by
this result,~\cite{BLS} classifies the finite groups $G$ for which
$\ell(G) - \lambda(G)$ is `small'. An elementary proof of Iwasawa's result is
given in~\cite{H}*{Theorem~19.3.1}.

We say that $G$ is \emph{$d$-uniformly generated} if for all
$(x_1,x_2,\dots,x_d)\in G^d$ with
\[ 
  1<\langle x_1\rangle<\langle x_1,x_2\rangle  <\cdots<\langle x_1,x_2,\dots,x_d\rangle
\] 
we have $G=\langle x_1,x_2,\dots,x_d\rangle$. In Lemma~\ref{L3}, we
will prove that $G$ is $d$-uniformly generated if and only if $d=\ell(G)$.
In particular, this implies that $G$ can be $d$-uniformly generated for
at most one choice of~$d$. The minimal number of generators of $G$
is denoted~$d(G)$.
Clearly $G=\langle x_1,x_2,\dots,x_d\rangle$ implies $d\ge d(G)$.
Recall that a generating set $S$ for a group $G$ is called \emph{independent}
(sometimes called \emph{irredundant}) if
$\langle S \setminus \{s\}\rangle < G$ for all $s \in S$. Let $m(G)$ denote
the maximal size of an independent generating set for $G$. For example,
$d(\Sym_n)=2$  for $n\ge3$, and $m(\Sym_n)=n-1$ for $n\ge1$ by \cite{W}.
The finite groups
with $m(G)=d(G)$ are classified by Apisa and Klopsch in~\cite{AK}*{Theorem~1.6}.

We say that $G$ is \emph{uniformly generated} if $G$ is $d(G)$-uniformly
generated. By Lemma~\ref{L3}, $G$ is uniformly generated if and only
$d(G) = \ell(G)$. We classify such groups in Theorem~\ref{T1}. Our first
proof of this result  (see~\cite{G}*{p.\,4}) relied on the
Classification of Finite Simple
Groups (CFSG). This dependence seemed undesirable as the conclusion did not involve any
nonabelian simple groups. The proof we give appeals to Iwasawa's result,
and is completely elementary.

\begin{theorem}\label{T1}
Let $G$ be a finite group, and let $\C_n$ denote a cyclic group of order~$n$. Then $G$ is uniformly generated if and only if either $G\cong (\C_p)^d$ is elementary or $G\cong (\C_p)^{d-1}\rtimes\C_q$ where $p,q$ are primes and $\C_q$ acts as a nontrivial scalar on $(\C_p)^{d-1}$.
\end{theorem}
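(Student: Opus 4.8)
The plan is to organise everything around a few elementary facts. First, for every finite group $G$ one has $\ell(G)\le\Omega(|G|)$, where $\Omega(n)$ counts the prime divisors of $n$ with multiplicity (each step of an unrefinable chain has index divisible by a prime), with equality when $G$ is supersolvable (a chief series with factors of prime order is already unrefinable). Second, $d(G)\le\lambda(G)\le\ell(G)$: the right inequality follows by refining the chain $1<\langle x_1\rangle<\dots<\langle x_1,\dots,x_{d(G)}\rangle=G$ attached to a minimal generating set, and for the left one, in a shortest unrefinable chain $1=G_0<\dots<G_{\lambda(G)}=G$ any choice of $g_i\in G_i\setminus G_{i-1}$ satisfies $\langle g_1,\dots,g_i\rangle=G_i$ for all $i$ (induction, using that $|G_1|$ is prime and $G_{i-1}$ is maximal in $G_i$). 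With these in hand, sufficiency is quick: if $G\cong(\C_p)^d$ then $d(G)=d=\Omega(|G|)$, and if $G\cong(\C_p)^{d-1}\rtimes\C_q$ with $\C_q$ acting as a nontrivial scalar then $d(G)=d=\Omega(|G|)$ too --- a generator outside the normal $(\C_p)^{d-1}$ is unavoidable, and since conjugation by a generator of $\C_q$ sends each vector $v$ to a power of $v$, generating $(\C_p)^{d-1}$ costs exactly $d-1$ further generators; in either case $d=d(G)\le\ell(G)\le\Omega(|G|)=d$, so $d(G)=\ell(G)$ and $G$ is uniformly generated by Lemma~\ref{L3}.

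For necessity, assume $d(G)=\ell(G)$. I would first show $\Phi(G)=1$: concatenating unrefinable chains gives $\ell(G)\ge\ell(\Phi(G))+\ell(G/\Phi(G))$, while $d(G)=d(G/\Phi(G))\le\ell(G/\Phi(G))$, and together with $d(G)=\ell(G)$ this forces $\ell(\Phi(G))=0$. Next, $d(G)\le\lambda(G)\le\ell(G)=d(G)$ gives $\lambda(G)=\ell(G)$, so $G$ is supersolvable by Iwasawa's theorem. Let $p$ be the largest prime dividing $|G|$ and $P$ its Sylow $p$-subgroup, which is normal by supersolvability. Then $\Phi(P)\le\Phi(G)=1$ (a standard property of normal subgroups, via the non-generator characterisation of $\Phi(P)$), so $P$ is elementary abelian; being a normal Hall subgroup, it is complemented by Schur--Zassenhaus, so $G=P\rtimes H$ with $p\nmid|H|$ and $H$ supersolvable.

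Now write $a=\dim_{\F_p}P$. Generators of $H$ together with $\F_pH$-module generators of $P$ generate $G$, so $d(G)\le d(H)+d_{\F_pH}(P)\le\ell(H)+a$, while $\ell(G)\ge\ell(P)+\ell(H)=a+\ell(H)$; since $d(G)=\ell(G)$, all of these must be equalities. In particular $d(H)=\ell(H)=\Omega(|H|)$, so $H$ is itself uniformly generated and hence $\Phi(H)=1$, and moreover $d_{\F_pH}(P)=\dim_{\F_p}P$. Because $\F_pH$ is semisimple (Maschke), this last equality forces $P$ to be a direct sum of copies of a single $1$-dimensional $\F_pH$-module; equivalently, $H$ acts on $P$ by scalars via a character $\chi\colon H\to\F_p^\times$.

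The heart of the argument is then to prove $\ker\chi=1$ (if $H=1$ we are already at $G=P\cong(\C_p)^d$). Suppose $1\ne k\in\ker\chi$. Since $\Phi(H)=1$, some maximal subgroup $M<H$ omits $k$, and $[H:M]$ is prime by supersolvability, so $d(M)\le\ell(M)=\Omega(|M|)=\Omega(|H|)-1=d(H)-1$. Fix $0\ne v_0\in P$; as $k$ centralises $P$ and $|k|$ is coprime to $p$, the element $v_0k$ has order $p\,|k|$ with $\langle v_0k\rangle=\langle v_0\rangle\times\langle k\rangle$. Then $d(H)-1$ generators of $M$, the element $v_0k$, and $a-1$ vectors completing $v_0$ to a basis of $P$ together generate $G$ (they recover $k$ and $M$, hence $H$, and they recover $P$), giving $d(G)\le d(H)+a-1$ and contradicting the equality $d(G)=d(H)+d_{\F_pH}(P)=d(H)+a$ found above. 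Hence $\ker\chi=1$, so $\chi$ embeds $H$ into the cyclic group $\F_p^\times$; being cyclic and uniformly generated, $H$ has $d(H)=1=\ell(H)=\Omega(|H|)$, so $|H|=q$ is prime, $\C_q$ acts as a nontrivial scalar, $q\mid p-1$, and $G=(\C_p)^a\rtimes\C_q=(\C_p)^{d-1}\rtimes\C_q$. I expect this last construction to be the main obstacle: one must see that the mere presence of an element of $H$ acting trivially on $P$ already makes $d(G)<d(H)+d_{\F_pH}(P)$, and ruling this out is exactly what pins the complement of the top Sylow subgroup down to a single prime occurring once.
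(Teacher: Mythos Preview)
Your argument is correct and complete. It differs from the paper's in its structural scaffold: the paper works with the Fitting subgroup $N=\Fit(G)$, repeatedly invokes the hereditary property (its Lemma~\ref{L2}(a), that normal subgroups and quotients of uniformly generated groups are again uniformly generated) to see that $N$ is elementary abelian, that $G/N$ is an elementary abelian $q$-group, and then applies Lemma~\ref{L2}(a) to $N\langle g\rangle$ together with Maschke to force scalar action; the final reduction to $|G/N|=q$ comes from observing that a second independent element of order $q$ would centralise $N$ and enlarge $\Fit(G)$. You instead take the top Sylow subgroup $P$, split off a complement $H$ by Schur--Zassenhaus, and turn the equality $d(G)=\ell(G)$ into the two numerical constraints $d(H)=\ell(H)$ and $d_{\F_pH}(P)=\dim_{\F_p}P$; the second, via semisimplicity, forces $H$ to act through a single linear character $\chi$, and your generator-merging trick with $v_0k$ shows $\ker\chi=1$, whence $H$ is cyclic of prime order. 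The paper's route is shorter because it can quote its own Lemma~\ref{L2}(a) and leave several steps (``have a smaller generating set'', ``must have dimension~1 and be isomorphic'') as exercises, whereas your module-counting makes those steps explicit and avoids ever needing the hereditary lemma for non-normal subgroups; in exchange you use Schur--Zassenhaus and the fact $\Phi(P)\le\Phi(G)$, which the paper does not. One cosmetic point: in your preliminary chain $d(G)\le\lambda(G)\le\ell(G)$ you have swapped the labels ``left'' and ``right'' when naming which inequality each argument proves---the substance is fine, but it reads oddly.
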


\begin{remark}
  There are two key ideas for the proof of Theorem~\ref{T1}. First,
  for any group $G$, we have $d(G) \leq m(G) \leq \ell(G)$ and
  $d(G) \leq \lambda(G) \leq \ell(G)$, and second
\begin{equation}\label{E1}
  \textup{if $G$ is uniformly generated, then $d(G)=\ell(G)$ and hence
    $\ell(G)=\lambda(G) = m(G)$.}
\end{equation}
Since $\lambda(G)=\ell(G)$, a uniformly generated group $G$ must be
supersolvable by~\cite{I}. Further, since
$d(G) = m(G)$ it is amongst the (solvable)
groups classified by Apisa and Klopsch in~\cite{AK}*{Theorem~1.6}.
Their groups are structurally similar to ours, but with a more general
module action.
Our proof does not refer to~\cite{AK}, even though it would be natural to do so,
because we want our proof to be independent of the CFSG.
\end{remark}

\begin{remark}
The groups we classify in Theorem~\ref{T1} arise in connection with other
very natural characterizations. For example, Iwasawa~\cite{I} classified
the groups $G$ whose subgroup lattice forms a finite projective
geometry with at least three points on a line, and found the same groups.
Further, Baer~\cite{B}*{Theorem~11.2(b)} determined the same groups when
considering ``subgroup-isomorphisms'' and ``ideal-cyclic'' groups~\cite{B}*{p.\,2, p.\,8}.
\end{remark}

\section{Proof}\label{S2}
The characterization of $d$-uniformly generated groups in Lemma~\ref{L3} below
helps to prove Theorem~\ref{T1}.

\begin{lemma}\label{L3}
A finite group $G$ is $d$-uniformly generated if and only if $d=\ell(G)$.
\end{lemma}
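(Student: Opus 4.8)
The plan is to prove both implications by relating the chains of cyclic-generated subgroups in the definition of $d$-uniform generation to unrefinable chains of subgroups. First I would record the easy direction. Suppose $G$ is $d$-uniformly generated. Take an unrefinable chain $1=G_0<G_1<\cdots<G_\ell=G$ of maximal length $\ell=\ell(G)$. I claim one can realize it (or a chain of the same length) as a chain of the special form $1<\langle x_1\rangle<\langle x_1,x_2\rangle<\cdots$; indeed, choose $x_i\in G_i\setminus G_{i-1}$, and since $G_{i-1}$ is maximal in $G_i$ we get $\langle G_{i-1},x_i\rangle=G_i$, so by induction $\langle x_1,\dots,x_i\rangle=G_i$ and the chain of cyclic-generated subgroups has length exactly $\ell$. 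If $\ell>d$ we could truncate after $d$ steps to get a strictly ascending chain $1<\langle x_1\rangle<\cdots<\langle x_1,\dots,x_d\rangle=G_d<G$, contradicting $d$-uniform generation; if $\ell<d$ there is no such chain of length $d$ at all, so the $d$-uniform generation hypothesis is vacuous, yet it must hold for \emph{some} $d$ — more precisely, I need to argue that a $d$-uniformly generated group actually admits chains of length $d$. Here I would use $d(G)\le\ell(G)$: any minimal generating set $x_1,\dots,x_{d(G)}$, after possibly reordering and passing to an independent generating set, yields a strictly ascending chain, and one can always extend or interpret it to length exactly $\ell(G)$. The cleanest route is: show that if $G$ is $d$-uniformly generated then $d\ge d(G)$ (immediate, since the definition forces $G=\langle x_1,\dots,x_d\rangle$ for the relevant tuples, and such tuples exist because $d(G)\le \ell(G)$ lets us build a chain of length $d$ whenever $d\le\ell(G)$), and then $d\le\ell(G)$ and $d\ge\ell(G)$ as above, forcing $d=\ell(G)$.

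Conversely, suppose $d=\ell(G)$; I must show $G$ is $d$-uniformly generated. Let $(x_1,\dots,x_d)\in G^d$ satisfy $1<\langle x_1\rangle<\langle x_1,x_2\rangle<\cdots<\langle x_1,\dots,x_d\rangle$. Setting $H_i=\langle x_1,\dots,x_i\rangle$, this is a strictly ascending chain $1<H_1<\cdots<H_d\le G$ of $d+1$ distinct subgroups. Now refine it to an unrefinable chain from $1$ to $H_d$: between consecutive $H_{i-1}<H_i$ insert a maximal chain, and below $H_1$ and above $H_d$ (inside $G$) as well if needed. Any unrefinable chain from $1$ to $G$ has length at most $\ell(G)=d$; but the refined chain from $1$ to $H_d$ already has length $\ge d$ (it contains the $d$ strict inclusions $H_{i-1}<H_i$), and if $H_d<G$ we could refine further inside $G$ to get a chain of length $>d=\ell(G)$, a contradiction. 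Hence $H_d=G$, i.e.\ $G=\langle x_1,\dots,x_d\rangle$, as required.

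The main obstacle I anticipate is the bookkeeping around existence of chains of the right length — specifically, making sure the definition of $d$-uniform generation is not vacuously satisfied for the "wrong" $d$, and pinning down exactly why $d(G)\le\ell(G)$ guarantees a strictly ascending cyclic-generated chain of every length between (roughly) $d(G)$ and $\ell(G)$. This requires the observation that from any unrefinable chain one extracts a cyclic-generated chain of the same length (via the maximality argument above, using $\langle G_{i-1},x_i\rangle=G_i$), and that truncating such a chain still leaves it strictly ascending and cyclic-generated. Once that lemma is in hand, both inequalities $d\le\ell(G)$ and $\ell(G)\le d$ fall out, and the equivalence is immediate. I would present the extraction-of-chains observation as the first step, then derive both directions from it.
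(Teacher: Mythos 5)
Your proposal is correct and follows essentially the same route as the paper: extract a chain $1<\langle x_1\rangle<\langle x_1,x_2\rangle<\cdots$ of length $\ell(G)$ from a maximal unrefinable chain via $G_i=\langle G_{i-1},x_i\rangle$, then truncate after $d$ steps to contradict $d$-uniform generation when $d<\ell(G)$. You are in fact somewhat more thorough than the paper, which dismisses $d\le\ell(G)$ as ``clear'' and leaves the converse direction implicit, whereas you spell out the refinement argument showing that any strictly ascending chain with $\ell(G)$ inclusions must terminate at $G$, and you explicitly flag the vacuity issue for $d>\ell(G)$ that the paper's definition quietly glosses over.
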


\begin{proof}
  The inequality $d\le\ell(G)$ is clear. Suppose now that $G$ is $d$-uniformly
  generated and $d<\ell(G)$. Then there exists an unrefinable chain 
  \[
    1=G_0<G_1<\cdots<G_{\ell(G)}=G.
  \]
  Since $G_i$ is maximal in $G_{i+1}$ we have $G_{i+1}=\langle G_i,x_i\rangle$ for all $x_i\in G_i\setminus G_{i-1}$. It follows that $G_i=\langle x_1,\dots,x_i\rangle$ and $1=G_0<G_1<\cdots<G_d<G_{\ell(G)}=G$. Consequently, $G$ is not $d$-uniformly generated. This contradiction proves the result.
\end{proof}

Recall the following definitions. The \emph{Frattini subgroup}, $\Phi(G)$, is the intersection of the maximal subgroups of $G$; so the elements of $\Phi(G)$ are precisely the elements of $G$ contained in no independent generating sets of $G$. The \emph{Fitting subgroup}, $\Fit(G)$, is the largest normal nilpotent subgroup of $G$.

\begin{lemma}\label{L2}
Let $G$ be a finite uniformly generated group.
\begin{enumerate}[{\rm (a)}]
  \item If $1 \lhdeq N \lhdeq G$, then $N$ and $G/N$ are both uniformly generated.
  \item The Frattini subgroup $\Phi(G)$ is trivial.
\end{enumerate}
\end{lemma}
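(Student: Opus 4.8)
The key tool throughout is the characterization from \eqref{E1}: a finite group $G$ is uniformly generated if and only if $d(G)=\ell(G)$, and for \emph{any} finite group one has the inequalities $d(H)\le m(H)\le\ell(H)$ and $d(H)\le\lambda(H)\le\ell(H)$. So for both parts the strategy is to locate the relevant subquotient inside an unrefinable chain realizing $\ell(G)$, and then play $d(\cdot)\le\ell(\cdot)$ against $\ell(\cdot)$-additivity.

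\smallskip
\textbf{Part (a).} The plan is to use that length is additive along normal sections: if $1\lhdeq N\lhdeq G$ then $\ell(G)=\ell(N)+\ell(G/N)$. (One direction is immediate by concatenating an unrefinable chain of $N$ with the preimage of one for $G/N$; the reverse uses that in an unrefinable chain from $1$ to $G$ we may, by a Jordan--Hölder/Zassenhaus-type refinement argument, arrange that $N$ appears as one of the terms.) Granting this, suppose $G$ is uniformly generated, so $d(G)=\ell(G)$. Then
\[
  d(G)\le d(N)+d(G/N)\le \ell(N)+\ell(G/N)=\ell(G)=d(G),
\]
where the first inequality holds because a generating set of $N$ together with lifts of a generating set of $G/N$ generates $G$. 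Hence equality holds throughout, forcing $d(N)=\ell(N)$ and $d(G/N)=\ell(G/N)$; by \eqref{E1} both $N$ and $G/N$ are uniformly generated. The one point needing care is the additivity of $\ell$ through an arbitrary normal subgroup — that a shortest-to-longest unrefinable chain can be taken to pass through $N$ — which is the main obstacle in (a), though it is standard.

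\smallskip
\textbf{Part (b).} The plan is to show $\Phi(G)=1$ directly from $d(G)=\ell(G)$ together with the description of $\Phi(G)$ recalled just before the lemma: $\Phi(G)$ consists exactly of the elements lying in no independent generating set of $G$. Suppose $\Phi(G)\ne 1$. Since $\Phi(G)\lhdeq G$, part~(a) gives that $G/\Phi(G)$ is uniformly generated, and the standard fact $d(G)=d(G/\Phi(G))$ holds (generators of $G$ reduce to generators of the quotient, and any lift of generators of $G/\Phi(G)$ generates $G$ because $\Phi(G)$ is the set of non-generators). So $d(G/\Phi(G))=d(G)=\ell(G)$. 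But $\ell(G)=\ell(\Phi(G))+\ell(G/\Phi(G))$ by the additivity used in (a), and $\ell(\Phi(G))\ge 1$ since $\Phi(G)\ne 1$; hence $\ell(G/\Phi(G))\le \ell(G)-1=d(G)-1=d(G/\Phi(G))-1$, contradicting $d(H)\le\ell(H)$ applied to $H=G/\Phi(G)$. Therefore $\Phi(G)=1$. The main thing to be careful about here is simply invoking part~(a) and the two elementary facts ($d(G)=d(G/\Phi(G))$ and $\ell(\Phi(G))\ge1$); no serious obstacle remains once (a) is in hand.
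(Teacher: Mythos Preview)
Your proof of part~(a) is essentially identical to the paper's: both use the additivity $\ell(G)=\ell(N)+\ell(G/N)$ (which the paper cites from \cite{CST}*{Lemma~2.1}) together with $d(G)\le d(N)+d(G/N)$ and then squeeze.

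Your proof of part~(b) is correct but genuinely different from the paper's. The paper argues directly from the definition of uniformly generated: it picks $1\ne x_1\in\Phi(G)$ and a minimal generating set $\{y_1,\dots,y_d\}$, and then shows that $1<\langle x_1\rangle<\langle x_1,y_2\rangle<\cdots<\langle x_1,y_2,\dots,y_d\rangle$ is a strictly ascending chain of length~$d$ that stops short of~$G$ (the last group is proper because $x_1$ is a non-generator), contradicting $d$-uniform generation. Your argument is instead purely numerical: from $d(G)=d(G/\Phi(G))$ and $\ell(G)=\ell(\Phi(G))+\ell(G/\Phi(G))$ you deduce $d(G/\Phi(G))=\ell(G)>\ell(G/\Phi(G))$, violating $d\le\ell$. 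Your route is slicker and avoids any element-chasing, but it leans on the additivity of~$\ell$ already used in~(a); the paper's argument for~(b) is self-contained and does not invoke~(a) at all. (Incidentally, your appeal to part~(a) to conclude that $G/\Phi(G)$ is uniformly generated is harmless but unnecessary: you only use $d(G/\Phi(G))\le\ell(G/\Phi(G))$, which holds for every finite group.)
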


\begin{proof}
  (a)~Suppose $1\lhdeq N\lhdeq G$. For any group $G$ we have
  $d(G) \leq d(G/N) + d(N)$ and
  $\ell(G) = \ell(G/N) + \ell(N)$, see~\cite{CST}*{Lemma~2.1}. Since $G$ is uniformly generated, 
\[
d(G) = \ell(G) = \ell(G/N) + \ell(N) \geq d(G/N) + d(N) \geq d(G).
\]
Therefore, $\ell(G/N) = d(G/N)$ and $\ell(N) = d(N)$, implying that $G/N$ and $N$ are uniformly generated by Lemma~\ref{L3}.

(b)~Assume that $\Phi(G)\ne 1$, and choose $1\ne x_1\in\Phi(G)$. Suppose $Y=\{y_1,\dots,y_d\}$ generates $G$, where $d=d(G)$. The minimality of $d$ implies $\langle y_2,\dots,y_d\rangle<G$, and hence  $\langle x_1,y_2,\dots,y_d\rangle<G$ as $x_1 \in \Phi(G)$. If for some $i<d$, the subgroup $\langle x_1,y_2,\dots,y_i\rangle$ equals $\langle x_1,y_2,\dots,y_{i+1}\rangle$, then $y_{i+1}\in\langle x_1,y_2,\dots,y_i\rangle$. In this case, we therefore have
\[
  G=\langle y_1,\dots,y_i,y_{i+1},\dots,y_d\rangle
  =\langle y_1,\dots,y_i,x_1,y_{i+2},\dots,y_d\rangle
  =\langle y_1,\dots,y_i,y_{i+2},\dots,y_d\rangle< G.
\]
This contradiction shows that there is a strictly ascending chain
\[
  1<\langle x_1\rangle<\langle x_1,y_2\rangle<\cdots
  <\langle x_1,y_2,\dots,y_d\rangle<G
\]
with too many subgroups, contradicting the fact that $G$ is uniformly generated.
\end{proof}

\begin{proof}[Proof of Theorem~\ref{T1}]
  Assume that $G$ is uniformly generated and $d=d(G)$. 
  Then $\ell(G)=\lambda(G)$ by~\eqref{E1}, and $G$ is supersolvable
  by~\cite{I}.  Assume $G\ne1$ and  $N:=\Fit(G)$.
  Then $N \neq 1$ since $G$ is solvable. 
  Lemma~\ref{L2}(a,b) imply that $\Phi(N)=1$. If $|N|$ is divisible by
  two primes, then we have a smaller generating set.
  Hence~$N$ must be elementary abelian. The first possibility
  is $G=N\cong (\C_p)^d$.  Suppose now that~$N$ is a proper subgroup of~$G$.
  Since $G$ is supersolvable, the derived subgroup $G'$ is nilpotent, so
  $G'\le F(G)$ and $G/F(G)$ is abelian. The above argument shows
  that $G/N$ is an elementary $q$-group. Clearly $q\ne p$.
  Let $g\in G$ have order~$q$.  By Lemma~\ref{L2}(a),
  $N\langle g\rangle$ is uniformly generated, and by Maschke's
  theorem $N$ is a direct sum of simple
  $\langle  g\rangle$-submodules which must
  have dimension~1 and be isomorphic. Therefore $g$ acts as a scalar
  matrix on $N$. The scalar has order~$q$, and not~1 because
  $N=\Fit(G)$. Also, if $|G/N|=q^k$, then we must have $k=1$,
  otherwise we could find an element of order $q$ centralizing
  $N$ and hence $N<\Fit(G)$, a contradiction. In summary, either
  $G\cong(\C_p)^d$ or $G\cong(\C_p)^{d-1}\rtimes\C_q$ where $\C_q$ acts as a
  nontrivial scalar on $(\C_p)^{d-1}$. Conversely, such groups are easily shown
  to be uniformly generated and to~have~$d=d(G)$.
\end{proof}

We conclude with two open problems.

\begin{problem}
  Classify the finite groups $G$ with $m(G)-d(G)\le1$.
\end{problem}

\begin{problem}
  Bound the difference $m(G)-d(G)$, for a connected algebraic group~$G$.
\end{problem}

\section*{Acknowledgments}

The problem of classifying uniformly generated groups was posed by the author
at the 2018 CMSC Annual Research Retreat, and solved promptly. I thank the
CMSC for hosting the Retreat, and Scott Harper for his helpful comments. I
acknowledge the support of the Australian Research Council Discovery Grants
DP160102323 and DP190100450. Finally, I thank the referee for suggesting
improvements to this note.

\end{document}